\documentclass{article}


\usepackage{amssymb,amsmath,color}
\usepackage{amsthm}
\usepackage{url}
\usepackage{tikz}

\definecolor{red}{rgb}{1,0,0}
\definecolor{blue}{rgb}{.2,.2,.8}

\def\l{\lambda}
\def\m{\mu}

\def\A{\mathcal A}

\newtheorem{theorem}{Theorem}[section]
\newtheorem{corollary}[theorem]{Corollary}

\theoremstyle{definition}

\newcommand*\samethanks[1][\value{footnote}]{\footnotemark[#1]}

\begin{document}

\title{Bisected theta series, least $r$-gaps in partitions,  and polygonal numbers}
\author{Cristina Ballantine\thanks{This work was partially supported by a grant from the Simons Foundation (\#245997 to Cristina Ballantine).}\\
	\footnotesize Department of Mathematics and Computer Science\\
	\footnotesize College of The Holy Cross\\
	\footnotesize Worcester, MA 01610, USA \\
	\footnotesize cballant@holycross.edu
	\and Mircea Merca\samethanks
	\\ 
	\footnotesize Academy of Romanian Scientists\\
	\footnotesize Splaiul Independentei 54\\
	\footnotesize Bucharest, 050094 Romania\\
	\footnotesize mircea.merca@profinfo.edu.ro
}
\date{}
\maketitle

\begin{abstract} The least $r$-gap, $g_r(\lambda)$, of a partition $\lambda$ is the smallest part of $\lambda$ appearing less than $r$ times. In this article we introduce two new partition functions involving  least $r$-gaps. 
We consider a bisection of a classical theta identity  and prove new identities relating Euler's partition function $p(n)$, polygonal numbers, and the new partition functions. To prove the results we use an interplay of combinatorial and $q$-series methods. 

 We also give a combinatorial interpretation for
$$\sum_{n=0}^\infty (\pm 1)^{k(k+1)/2} p(n-r\cdot k(k+1)/2).$$
\\ 
\\
{\bf Keywords:}  partitions, least gap, polygonal numbers, theta series
\\
\\
{\bf MSC 2010:}   05A17, 11P83
\end{abstract}

\section{Introduction}

In \cite{Merca15}, the second author considered a bisection of Euler's pentagonal number theorem
\begin{equation*}
(q;q)_\infty = \sum_{k=0}^\infty (-1)^{\lceil k/2 \rceil} q^{G_k}
\end{equation*}
based on the parity of the $k$-th generalized pentagonal number
$$G_{k} = \frac{1}{2} \left\lceil \frac{k}{2} \right\rceil \left\lceil \frac{3k+1}{2} \right\rceil,$$
and obtained the following result:
\begin{equation}\label{bisection1}
\sum_{k=0}^{\infty}\frac{1+(-1)^{G_k}}{2}(-1)^{\lceil k/2 \rceil}q^{G_k}=(q^2,q^{12},q^{14},q^{16},q^{18},q^{20},q^{30},q^{32};q^{32})_{\infty},
\end{equation}
where
	$$(a_1,a_2\ldots,a_n;q)_{\infty}=(a_1;q)_{\infty}(a_2;q)_{\infty}\cdots(a_n;q)_{\infty}.$$
Because the infinite product 
$$(a;q)_\infty = \prod_{k=0}^{\infty} (1-aq^k)$$
diverges when $a\neq 0$ 
and $|q|\geqslant 1$, 
whenever $(a;q)_\infty$ appears in a formula, we shall assume that $|q| <1$.
	
The following identity for Euler's partition function $p(n)$ was obtained in \cite{Merca15} as a combinatorial interpretation of \eqref{bisection1}:
\begin{equation}
\sum_{k=0}^\infty \frac{1+(-1)^{G_k}}{2}(-1)^{\lceil k/2 \rceil} p(n-G_k) = L(n),
\end{equation}
where $L(n)$ is the number of partitions of $n$ into parts not congruent to 
$0$, $2$, $12$, $14$, $16$, $18$, $20$ or $30 \mod{32}$. This identity is a bisection of Euler's  well-known recurrence relation for the partition function $p(n)$:
\begin{equation}\label{ER}\sum_{k=0}^\infty (-1)^{\lceil k/2 \rceil} p(n-G_k) = \delta_{0,n},\end{equation}
where $\delta_{i,j}$ is the Kronecker delta function. For  details on \eqref{ER} see Andrews's book \cite{Andrews76}.

In this paper, motivated by these results, we  consider a bisection of another classical theta identity \cite[eq. 2.2.13]{Andrews76}
\begin{equation}\label{eq1}
\frac{(q^2;q^2)_\infty}{(-q;q^2)_\infty} = \sum_{k=0}^{\infty} (-q)^{k(k+1)/2}
\end{equation}
in order to derive new identities for Euler's partition function. These identities involve new partition functions which we define below.

For what follows, we denote by $g_r(\l)$  the smallest part of the partition $\l$ appearing less than $r$ times. 
	\begin{table}[t]
		\centering
		\begin{tabular}{c c c c c c c c} 
			\hline
			$\lambda$ &  5 & 4+1 & 3+2 & 3+1+1 & 2+2+1 & 2+1+1+1 & 1+1+1+1+1 \\ [0.5ex] 
			$g_1(\lambda)$  & 1 & 2 & 1 & 2 & 3 & 3 & 2 \\ [0.5ex] 
			$g_2(\lambda)$  & 1 & 1 & 1 & 2 & 1 & 2 & 2 \\ [0.5ex] 
			$g_3(\lambda)$  & 1 & 1 & 1 & 1 & 1 & 2 & 2 \\ [0.5ex] 
			$g_4(\lambda)$  & 1 & 1 & 1 & 1 & 1 & 1 & 2 \\ [0.5ex] 		
			$g_5(\lambda)$  & 1 & 1 & 1 & 1 & 1 & 1 & 2 \\ [0.5ex] 		
			$g_6(\lambda)$  & 1 & 1 & 1 & 1 & 1 & 1 & 1 \\ [0.5ex] 				
			\hline
		\end{tabular}
		\label{table:1}
		\caption{The partition functions $g_r$ for $\lambda\vdash 5$}
	\end{table}		
The limit distribution  of $g_r(\l)$ has been studied in \cite{Wagner}. In the literature, $g_1(\l)$ is referred to as the least gap of $\l$. By analogy, we refer to $g_r(\lambda)$ as the \textit{least $r$-gap} of $\lambda$. To make formulas more concise, we set $g_0(n)=\infty$. 
We denote by $S_r(n)$ the sum of the least $r$-gaps in all partitions of $\lambda$, i.e., $$S_r(n)=\sum_{\l \vdash n}g_r(\l).$$ Thus, $S_1(n)$ is the sum of the least gaps in all partitions of $n$. By Table \ref{table:1}, we see, for example, that
$$S_1(5) = 1+2+1+2+3+3+2 = 14$$ and $$S_4(n)=1+1+1+1+1+1+2=8.$$

When $r \geqslant 2$, for each partition $\l$ we have $g_r(\l)\leqslant g_{r-1}(\l)$. Let $G_r(n)$ be the number of partitions $\l$ of $n$ satisfying $g_r(\l)<g_{r-1}(\l)$. It is clear that $G_1(n)=p(n)$ and $G_r(n)=0$ for $r\geqslant n+2$.

To our knowledge, the functions $S_r(n)$ and $G_r(n)$ have not been considered previously in the literature. 

It is known \cite[A022567]{Sloane} that the sum of the least gaps in all partitions of $n$ can be expressed in terms of the Euler's partition function $p(n)$:
\begin{equation}\label{eq0}
\sum_{k=0}^\infty p(n-T_k) = S_1(n),
\end{equation}
where $T_n= n(n+1)/2$ is the $n$-th triangular number.
Upon reflection, one expects that there might be infinite families of such identities where \eqref{eq0} is the first entry.
As far as we know, the following identity has not been remarked before.

\begin{theorem}\label{T0}
	For $n\geqslant 0$ and $r\geqslant 1$,
	\begin{equation}\label{eqT0}
	\sum_{k=0}^\infty p(n-rT_k) = S_r(n).
	\end{equation}
\end{theorem}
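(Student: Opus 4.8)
The plan is to prove \eqref{eqT0} combinatorially, by reading the right-hand side $\sum_{k\geqslant 0}p(n-rT_k)$ as a single count and matching it to $S_r(n)$. The engine of the argument is to rewrite the statistic $g_r(\lambda)$ as a sum of indicator functions. Since $g_r(\lambda)\geqslant 1$ for every partition $\lambda$, I would first write, using the Iverson bracket,
\[
g_r(\lambda)=\sum_{j=1}^{\infty}\big[\,g_r(\lambda)\geqslant j\,\big],
\]
sum this over all $\lambda\vdash n$, and interchange the (finite, for fixed $n$) order of summation to obtain
\[
S_r(n)=\sum_{\lambda\vdash n}g_r(\lambda)=\sum_{j=1}^{\infty}\#\{\lambda\vdash n:\ g_r(\lambda)\geqslant j\}.
\]

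The key step is then to identify the inner count. By the definition of $g_r$ as the smallest part occurring fewer than $r$ times, the condition $g_r(\lambda)\geqslant j$ holds exactly when each of the integers $1,2,\dots,j-1$ occurs at least $r$ times as a part of $\lambda$ (for $j=1$ this condition is vacuous, so every partition qualifies). Partitions of $n$ satisfying this are in bijection with unrestricted partitions of $n-r\,(1+2+\cdots+(j-1))=n-rT_{j-1}$, the bijection being: remove exactly $r$ copies of each of the parts $1,\dots,j-1$, and conversely adjoin them back. Hence $\#\{\lambda\vdash n:\ g_r(\lambda)\geqslant j\}=p(n-rT_{j-1})$, and after the reindexing $k=j-1$ (so that the $k=0$ term is $p(n-rT_0)=p(n)$ since $T_0=0$) we arrive at
\[
S_r(n)=\sum_{j=1}^{\infty}p(n-rT_{j-1})=\sum_{k=0}^{\infty}p(n-rT_k),
\]
which is \eqref{eqT0}.

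I do not expect a deep obstacle here; the care required is entirely in the characterization of the event $g_r(\lambda)\geqslant j$ and in keeping the two index shifts straight (the bracket index $j$, the triangular-number index $j-1$, and the final summation index $k$), together with the convention $p(m)=0$ for $m<0$ that truncates the sum. As an independent cross-check, the same decomposition yields the generating-function identity $\sum_{n\geqslant 0}S_r(n)q^n=\dfrac{1}{(q;q)_\infty}\sum_{k\geqslant 0}q^{rT_k}$, whose right-hand side is manifestly the generating function of $\sum_{k\geqslant 0}p(n-rT_k)$; this confirms the claim and also shows how \eqref{eqT0} specializes to the known case \eqref{eq0} when $r=1$.
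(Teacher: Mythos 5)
Your proof is correct and is essentially the paper's own argument: the Iverson-bracket decomposition $g_r(\lambda)=\sum_{j\geqslant 1}\left[g_r(\lambda)\geqslant j\right]$ combined with the remove/adjoin-$r$-copies bijection is exactly the paper's double counting, in which partitions of $n-rT_k$ are injected into partitions of $n$ by inserting the fat staircase $(1^r,2^r,\dots,k^r)$, and each $\lambda$ with $g_r(\lambda)=k$ is counted $k$ times. The two write-ups differ only in notation, so there is nothing to fix.
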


In section \ref{sec2}, we provide a combinatorial proof of Theorem \ref{T0}. 
Then, theta identity \eqref{eq1} and Theorem \ref{T0} allow us to find the generating function for $S_r(n)$ and to prove the  following result that also involves the partitions of $n$ into parts not congruent to $0$, $r$ or $3r \bmod {4r}$. We denote by $U_r(n)$ the number of these partitions.

\begin{theorem}\label{T1}
	For $n\geqslant 0$ and $r\geqslant 1$,
	\begin{enumerate}
		\item[(i)] $\displaystyle{ \sum_{k=0}^\infty \left(p(n-rT_{4k})+p(n-rT_{4k+3}) \right) = \frac{S_r(n)}{2}+\frac{U_r(n)}{2} ;}$
		\item[(ii)] $\displaystyle{ \sum_{k=0}^\infty \left(p(n-rT_{4k+1})+p(n-rT_{4k+2}) \right) = \frac{S_r(n)}{2}-\frac{U_r(n)}{2} .}$		
	\end{enumerate}
\end{theorem}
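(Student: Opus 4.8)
The plan is to pass to generating functions and exploit the bisection of \eqref{eq1} dictated by the parity of the triangular numbers $T_k$. First I would use Theorem \ref{T0} to record the generating function for the sum of least $r$-gaps: multiplying \eqref{eqT0} by $q^n$, summing over $n$, and interchanging the order of summation gives
\[
\sum_{n=0}^\infty S_r(n)\, q^n = \frac{1}{(q;q)_\infty}\sum_{k=0}^\infty q^{rT_k}.
\]
The two sums on the left-hand sides of (i) and (ii) are, coefficientwise against $1/(q;q)_\infty$, the partial theta sums
\[
A(q) = \sum_{k=0}^\infty \left(q^{rT_{4k}}+q^{rT_{4k+3}}\right), \qquad B(q)=\sum_{k=0}^\infty\left(q^{rT_{4k+1}}+q^{rT_{4k+2}}\right),
\]
so it suffices to identify $A(q)/(q;q)_\infty$ and $B(q)/(q;q)_\infty$ in closed form.

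Next I would bisect the right-hand side of \eqref{eq1}. Writing $(-q)^{k(k+1)/2}=(-1)^{T_k}q^{T_k}$ and computing the parity of $T_k=k(k+1)/2$, one checks that $T_k$ is even exactly when $k\equiv 0,3\pmod 4$ and odd exactly when $k\equiv 1,2\pmod 4$. Hence, after replacing $q$ by $q^r$ in \eqref{eq1}, the theta series splits as
\[
A(q)-B(q) = \frac{(q^{2r};q^{2r})_\infty}{(-q^r;q^{2r})_\infty}, \qquad A(q)+B(q)=\sum_{k=0}^\infty q^{rT_k}.
\]
The second identity, divided by $(q;q)_\infty$, is precisely the generating function for $S_r(n)$ recorded above.

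The crux is to recognize $\frac{1}{(q;q)_\infty}\cdot\frac{(q^{2r};q^{2r})_\infty}{(-q^r;q^{2r})_\infty}$ as the generating function for $U_r(n)$. I would first simplify the theta quotient using $(q;q^2)_\infty(-q;q^2)_\infty=(q^2;q^4)_\infty$ together with the splitting $(q^2;q^2)_\infty=(q^2;q^4)_\infty(q^4;q^4)_\infty$, which collapses the left side of \eqref{eq1} to
\[
\frac{(q^2;q^2)_\infty}{(-q;q^2)_\infty}=(q;q^2)_\infty\,(q^4;q^4)_\infty,
\]
and hence, after $q\mapsto q^r$, rewrites the quotient as $(q^r;q^{2r})_\infty(q^{4r};q^{4r})_\infty$. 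Multiplying by $1/(q;q)_\infty$ cancels exactly the factors $1-q^m$ with $m$ an odd multiple of $r$ (from $(q^r;q^{2r})_\infty$) together with those with $m$ a multiple of $4r$ (from $(q^{4r};q^{4r})_\infty$). The bookkeeping that closes the argument is the observation that an odd multiple of $r$ is congruent to $r$ or $3r\pmod{4r}$, so the surviving factors are precisely the $1/(1-q^m)$ with $m\not\equiv 0,r,3r\pmod{4r}$, i.e.\ the generating function for $U_r(n)$. I expect this residue-matching step to be the only genuine obstacle, and it is elementary once the product has been brought into the displayed form.

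Finally I would solve the resulting linear system
\[
\frac{A(q)}{(q;q)_\infty}=\frac12\left(\sum_{n} S_r(n)q^n+\sum_{n} U_r(n)q^n\right),\qquad \frac{B(q)}{(q;q)_\infty}=\frac12\left(\sum_{n} S_r(n)q^n-\sum_{n} U_r(n)q^n\right),
\]
and extract the coefficient of $q^n$ from each. The coefficient of $q^n$ in $A(q)/(q;q)_\infty$ is $\sum_{k\geqslant 0}\left(p(n-rT_{4k})+p(n-rT_{4k+3})\right)$, giving (i), while the coefficient of $q^n$ in $B(q)/(q;q)_\infty$ is $\sum_{k\geqslant 0}\left(p(n-rT_{4k+1})+p(n-rT_{4k+2})\right)$, giving (ii).
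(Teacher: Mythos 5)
Your proposal is correct and follows essentially the same route as the paper: bisect the theta series $\sum_k (-q^r)^{T_k}$ according to the parity of $T_k$ (even exactly for $k\equiv 0,3 \pmod 4$), identify the even part $\frac{1}{(q;q)_\infty}\sum_k q^{rT_k}$ with the generating function of $S_r(n)$ via Theorem \ref{T0}, identify the signed sum over $(q;q)_\infty$ with the generating function of $U_r(n)$ by reducing the product to $(q^r,q^{3r},q^{4r};q^{4r})_\infty/(q;q)_\infty$, and then take half the sum and half the difference. Your simplification $\frac{(q^2;q^2)_\infty}{(-q;q^2)_\infty}=(q;q^2)_\infty(q^4;q^4)_\infty$ is a slightly more direct way of reaching the same product that the paper obtains through its $\pm$ manipulation, but the decomposition and the key identities are the same.
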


By this theorem, we see that $S_r(n)$ and $U_r(n)$ have the same parity. 

\begin{corollary}
	For $n\geqslant 0$ and $r\geqslant 1$,  the sum of the least $r$-gaps in all partitions of $n$ and the number of partitions of $n$ into parts not congruent to $0$, $r$ or $3r \bmod {4r}$ have the same parity.
\end{corollary}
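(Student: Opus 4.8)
The plan is to read the corollary off directly from Theorem~\ref{T1}, exploiting the fact that the left-hand sides of its two identities are manifestly integers. Recall that $S_r(n)$ denotes the sum of the least $r$-gaps over all partitions of $n$ and that $U_r(n)$ counts the partitions of $n$ into parts not congruent to $0$, $r$, or $3r \bmod 4r$; the corollary asserts precisely that $S_r(n) \equiv U_r(n) \pmod 2$.

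First I would observe that the expression $\sum_{k=0}^\infty \bigl(p(n-rT_{4k}) + p(n-rT_{4k+3})\bigr)$ on the left of Theorem~\ref{T1}(i) is a \emph{finite} sum of values of Euler's partition function $p$ — finite because $p(m) = 0$ whenever $m < 0$, so only finitely many terms are nonzero — and hence a non-negative integer. By Theorem~\ref{T1}(i) this integer equals $\tfrac12\bigl(S_r(n) + U_r(n)\bigr)$, which forces $S_r(n) + U_r(n)$ to be even, and therefore $S_r(n)$ and $U_r(n)$ share the same parity. Equivalently, one could invoke part (ii), whose left-hand side is again an integer and equals $\tfrac12\bigl(S_r(n) - U_r(n)\bigr)$, giving that $S_r(n) - U_r(n)$ is even. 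There is no genuine obstacle to overcome: the substantive work has already been absorbed into the proof of Theorem~\ref{T1}, and the corollary is nothing more than the statement that the right-hand sides appearing there must be integers.
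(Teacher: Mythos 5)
Your proposal is correct and matches the paper's own reasoning: the paper derives this corollary directly from Theorem~\ref{T1}, noting that since the left-hand sides there are (finite, integer) sums of partition numbers, the quantities $\tfrac12\bigl(S_r(n)\pm U_r(n)\bigr)$ must be integers, so $S_r(n)$ and $U_r(n)$ have the same parity. Nothing is missing.
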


In addition, we have the following identity.

\begin{corollary}\label{Cor1}
	For $n\geqslant 0$ and $r\geqslant 1$,
	$$\sum_{k=0}^\infty (-1)^{T_k} p(n-rT_k) = U_r(n).$$
\end{corollary}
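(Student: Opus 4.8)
The plan is to derive Corollary~\ref{Cor1} directly from Theorem~\ref{T1} by taking an alternating combination of its two parts, exploiting the behavior of the sign factor $(-1)^{T_k}$ on residue classes of $k$ modulo $4$. First I would recall that the triangular number $T_k = k(k+1)/2$ has a well-known periodic parity: $(-1)^{T_k}$ equals $+1$ when $k \equiv 0$ or $3 \pmod 4$ and equals $-1$ when $k \equiv 1$ or $2 \pmod 4$. This is exactly the dichotomy that underlies the four-fold splitting in Theorem~\ref{T1}, where the indices $4k, 4k+3$ are grouped together and $4k+1, 4k+2$ are grouped together.

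With this observation in hand, the key step is to split the sum $\sum_{k=0}^\infty (-1)^{T_k} p(n-rT_k)$ according to $k \bmod 4$. The terms with $k \equiv 0, 3 \pmod 4$ carry the sign $+1$ and, reindexed, reassemble into $\sum_{k=0}^\infty \bigl(p(n-rT_{4k}) + p(n-rT_{4k+3})\bigr)$, which is the left-hand side of Theorem~\ref{T1}(i). The terms with $k \equiv 1, 2 \pmod 4$ carry the sign $-1$ and reassemble into $-\sum_{k=0}^\infty \bigl(p(n-rT_{4k+1}) + p(n-rT_{4k+2})\bigr)$, the negative of the left-hand side of Theorem~\ref{T1}(ii). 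Substituting the right-hand sides from Theorem~\ref{T1} then yields
\begin{equation*}
\sum_{k=0}^\infty (-1)^{T_k} p(n-rT_k) = \left(\frac{S_r(n)}{2}+\frac{U_r(n)}{2}\right) - \left(\frac{S_r(n)}{2}-\frac{U_r(n)}{2}\right) = U_r(n),
\end{equation*}
as claimed.

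I expect no serious obstacle here, since the corollary is a formal consequence of the theorem; the only point demanding care is the bookkeeping of the reindexing, namely verifying that every nonnegative integer $k$ falls into exactly one of the four residue classes and that the partial sums converge absolutely (which they do, as $p(n-rT_k)$ vanishes once $rT_k > n$, making each sum finite). If anything is delicate, it is simply confirming the parity rule for $(-1)^{T_k}$ in the boundary cases and ensuring the signs in the two groups match the grouping chosen in Theorem~\ref{T1}; both are routine to check against small values of $k$.
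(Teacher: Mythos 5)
Your proposal is correct and matches the paper's intended derivation: Corollary~\ref{Cor1} is stated as an immediate consequence of Theorem~\ref{T1}, obtained precisely by subtracting part (ii) from part (i), using the fact that $(-1)^{T_k}=+1$ exactly when $k\equiv 0,3 \pmod 4$ and $-1$ when $k\equiv 1,2 \pmod 4$. Your parity check and the finiteness observation (since $p(m)=0$ for $m<0$) are exactly the bookkeeping needed, so nothing is missing.
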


Replacing $r$ by $1$ in Corollary \ref{Cor1}, we obtain another known identity (see \cite[the proof of Theorem 2.3]{BM}).

\begin{corollary}\label{C1}
	For $n\geqslant 0$,
	$$ \sum_{k=0}^\infty (-1)^{T_k} p(n-T_k) = 
			\begin{cases}
			q\left(\frac{n}{2} \right), & \text{for $n$ even,}\\
			0, & \text{for $n$ odd,}
			\end{cases}	
	$$
	 where $q(n)$ is the number of partitions of $n$ into distinct parts.	
\end{corollary}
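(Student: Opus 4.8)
The plan is to deduce this corollary directly from Corollary~\ref{Cor1} by setting $r=1$ and then giving a purely combinatorial description of $U_1(n)$. Substituting $r=1$ into Corollary~\ref{Cor1} yields
$$\sum_{k=0}^\infty (-1)^{T_k} p(n-T_k) = U_1(n),$$
so the entire task reduces to showing that $U_1(n)$ equals $q(n/2)$ when $n$ is even and equals $0$ when $n$ is odd.

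First I would unwind the definition of $U_1(n)$. By definition $U_1(n)$ counts the partitions of $n$ into parts not congruent to $0$, $1$, or $3 \bmod 4$; since the only residue class modulo $4$ that remains is $2$, this is precisely the number of partitions of $n$ into parts congruent to $2 \bmod 4$. Every such part is even, so if $n$ is odd no partition of this type can exist and $U_1(n)=0$, which settles the odd case immediately.

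For $n$ even, write $n=2m$. The key step is the bijection that halves every part: a part congruent to $2 \bmod 4$ has the form $2(2j-1)$ with $j\geqslant 1$, so dividing each part by $2$ sends a partition of $2m$ into parts $\equiv 2 \bmod 4$ to a partition of $m$ into odd parts, and this map is clearly reversible. Hence $U_1(2m)$ equals the number of partitions of $m$ into odd parts. Finally I would invoke Euler's classical theorem, which asserts that the number of partitions of $m$ into odd parts equals the number of partitions of $m$ into distinct parts, namely $q(m)=q(n/2)$. Combining these observations gives $U_1(n)=q(n/2)$ for even $n$ and $U_1(n)=0$ for odd $n$, as required.

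Since every ingredient is either an already-proved result (Corollary~\ref{Cor1}) or a standard classical fact (Euler's odd/distinct theorem), there is no deep obstacle here; the only point that requires a moment's care is the bookkeeping of residue classes---correctly identifying that excluding $0$, $1$, $3 \bmod 4$ leaves exactly the class $2 \bmod 4$, and that such parts are precisely twice the odd numbers, so that the halving map lands among partitions into odd parts and Euler's theorem applies.
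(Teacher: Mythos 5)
Your proof is correct and follows the same route as the paper: specializing Corollary~\ref{Cor1} to $r=1$ and identifying $U_1(n)$ (partitions into parts $\equiv 2 \bmod 4$) with $q(n/2)$ for even $n$ and $0$ for odd $n$. The only difference is that the paper delegates this last identification to a citation of \cite{BM}, whereas you supply the standard details yourself (the halving bijection plus Euler's odd-equals-distinct theorem), which is a harmless and welcome expansion.
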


It is shown in \cite[Corollary 4.7]{Merca16} that $q(n)$ is odd if and only if $n$ is a generalized pentagonal number. Thus,  we deduce the following result related to the parity of $S_1(n)$.

\begin{corollary}\label{C6}
	For $n\geqslant 0$, the sum of the least gaps in all partitions of $n$ is even except when $n$ is twice a generalized pentagonal number.
\end{corollary}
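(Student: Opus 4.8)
The plan is to reduce the parity of $S_1(n)$ to the parity of the alternating sum appearing in Corollary \ref{C1}, exploiting the elementary fact that signs are invisible modulo $2$. Starting from Theorem \ref{T0} with $r=1$, we have $S_1(n)=\sum_{k=0}^\infty p(n-T_k)$. Since $(-1)^{T_k}\equiv 1\pmod 2$ for every $k$, each term of this sum is congruent modulo $2$ to the corresponding term of the alternating sum, so
\begin{equation*}
S_1(n)=\sum_{k=0}^\infty p(n-T_k)\equiv \sum_{k=0}^\infty (-1)^{T_k}\,p(n-T_k)\pmod 2.
\end{equation*}
This is the one genuine idea in the argument, and everything else is bookkeeping.

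Next I would invoke Corollary \ref{C1}, which evaluates the right-hand alternating sum as $q(n/2)$ when $n$ is even and as $0$ when $n$ is odd. Combining this with the congruence above yields $S_1(n)\equiv 0\pmod 2$ whenever $n$ is odd, and $S_1(n)\equiv q(n/2)\pmod 2$ whenever $n$ is even. Thus for odd $n$ the sum of least gaps is automatically even, and the only remaining question is the parity of $q(n/2)$ in the even case.

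Finally I would apply the cited result from \cite[Corollary 4.7]{Merca16}, namely that $q(m)$ is odd precisely when $m$ is a generalized pentagonal number. Taking $m=n/2$, we conclude that for even $n$ the quantity $S_1(n)$ is odd exactly when $n/2$ is a generalized pentagonal number, i.e.\ when $n$ is twice a generalized pentagonal number; in all other cases $S_1(n)$ is even. Together with the odd-$n$ case this establishes the corollary.

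I do not expect any serious obstacle here: the proof is essentially a chain of three implications, and the only step requiring a moment's care is the opening parity reduction, where one must be sure that replacing $(-1)^{T_k}$ by $1$ preserves the residue of the \emph{entire} sum and not merely of individual terms. Because the replacement happens termwise before summation, this is immediate, and the result follows.
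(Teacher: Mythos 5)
Your proof is correct and follows essentially the same route as the paper: both reduce the parity of $S_1(n)$ to the parity of $q(n/2)$ via Corollary \ref{C1} and then invoke \cite[Corollary 4.7]{Merca16}. The only cosmetic difference is that you justify the congruence $S_1(n)\equiv\sum_{k}(-1)^{T_k}p(n-T_k)\pmod 2$ by the termwise sign argument applied to Theorem \ref{T0}, whereas the paper quotes its corollary that $S_r(n)$ and $U_r(n)$ have the same parity (a consequence of Theorem \ref{T1}); the two justifications amount to the same observation.
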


If $s\geqslant 3$ is the number of sides of a polygon, the $n$th $s$-polygonal number (or $s$-gonal number) is $$P(s,n)=\frac{n^2(s-2)-n(s-4)}{2}.$$ If we allow $n \in \mathbb Z$, we obtain generalized $s$-gonal numbers. Note that, for $n>0$, we have $P(3,-n)=P(3,n-1)$ and for all $n$ we have $P(4,-n)=P(4,n)$.  For $s \geqslant 5$ and $n >0$, $P(s,-n)$ is not an ordinary $s$-gonal number. We remark that the $n$-th $s$-gonal number can be expressed in term of the triangular numbers $T_n$ as follows:
$$P(s,n)=(s-3)T_{n-1}+T_n.$$

Beside Theorem \ref{T0}, there is another infinite family of identities involving Euler's partition function $p(n)$ for which \eqref{eq0} is the special case $r=1$. 

\begin{theorem}\label{T2}
	For $n\geqslant 0$ and $r\geqslant 1$
	\begin{equation}\label{pn}\sum_{k=0}^{\infty}p\left( n-P(r+2,-k)\right) =S_r(n)+G_r(n).\end{equation}
\end{theorem}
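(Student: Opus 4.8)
The plan is to pass to generating functions, using Theorem~\ref{T0} to dispose of the $S_r(n)$ term and computing the generating function of $G_r(n)$ by hand. First I would record that, by the stated relation $P(s,n)=(s-3)T_{n-1}+T_n$ together with $T_{-k}=T_{k-1}$ and $T_{-k-1}=T_k$, one has $P(r+2,-k)=(r-1)T_k+T_{k-1}=rT_k-k$. Hence the left-hand side of \eqref{pn} is the coefficient of $q^n$ in
\[
\sum_{k=0}^\infty \frac{q^{rT_k-k}}{(q;q)_\infty}=\frac{1}{(q;q)_\infty}\sum_{k=0}^\infty q^{rT_k-k}.
\]
On the other hand, summing \eqref{eqT0} against $q^n$ gives $\sum_{n\ge0} S_r(n)q^n=\frac{1}{(q;q)_\infty}\sum_{k\ge0}q^{rT_k}$. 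Subtracting, it suffices to prove the single identity
\[
\sum_{n=0}^\infty G_r(n)\,q^n=\frac{1}{(q;q)_\infty}\sum_{k=0}^\infty\left(q^{rT_k-k}-q^{rT_k}\right),
\]
whose $k=0$ term on the right vanishes.

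The conceptual step is a clean description of the partitions counted by $G_r(n)$. For a partition $\l$ write $m_j(\l)$ for the multiplicity of $j$. If $a=g_r(\l)$, then $m_j(\l)\ge r$ for $j<a$ and $m_a(\l)\le r-1$; since every $j<a$ then also satisfies $m_j(\l)\ge r-1$, we get $g_{r-1}(\l)\ge a$. I would show that $g_r(\l)<g_{r-1}(\l)$ holds if and only if $m_a(\l)=r-1$: indeed $m_a(\l)\le r-2$ forces $g_{r-1}(\l)=a=g_r(\l)$, whereas $m_a(\l)=r-1$ yields $g_{r-1}(\l)>a$. Thus $G_r(n)$ is the number of partitions of $n$ whose least $r$-gap $a$ appears exactly $r-1$ times, i.e.\ with $1,\dots,a-1$ each appearing at least $r$ times and $a$ appearing exactly $r-1$ times.

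Granting this, I would compute the generating function by conditioning on the value $a\ge1$ of the least $r$-gap. The three blocks — parts $<a$ each occurring at least $r$ times, the part $a$ occurring exactly $r-1$ times, and parts $>a$ unrestricted — contribute
\[
\prod_{j=1}^{a-1}\frac{q^{rj}}{1-q^j}\cdot q^{(r-1)a}\cdot\prod_{j=a+1}^\infty\frac{1}{1-q^j}
=\frac{(1-q^a)\,q^{\,rT_{a-1}+(r-1)a}}{(q;q)_\infty}.
\]
Using $T_a-T_{a-1}=a$ one checks $rT_{a-1}+(r-1)a=rT_a-a$, so the summand equals $\frac{(1-q^a)q^{rT_a-a}}{(q;q)_\infty}=\frac{q^{rT_a-a}-q^{rT_a}}{(q;q)_\infty}$. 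Summing over $a\ge1$ and adjoining the vanishing $a=0$ term gives exactly the required identity, completing the proof.

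I expect the main obstacle to be the characterization of $G_r(n)$ in the second paragraph: one must pin down the multiplicity of the least $r$-gap \emph{exactly}, rather than merely bound it, and treat correctly the boundary case where $g_r(\l)=a$ with $m_a(\l)=0$. A purely bijective alternative is also available: for each $k$ the quantity $p(n-rT_k+k)$ counts partitions of $n$ in which $1,\dots,k-1$ occur at least $r$ times and $k$ occurs at least $r-1$ times (peel off $r$ copies of each $j<k$ and $r-1$ copies of $k$), and intersecting these sets against the sets $\{\l : g_r(\l)\ge k\}$ underlying Theorem~\ref{T0} isolates precisely the $G_r(n)$ partitions; this route avoids $q$-series but relies on the same characterization.
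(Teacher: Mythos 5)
Your proof is correct, but it takes a genuinely different route from the paper's. The paper proves Theorem \ref{T2} purely combinatorially, in exact parallel with its proof of Theorem \ref{T0}: for each $k$ it inserts the modified staircase $\delta'_r(k)=(1^r,2^r,\ldots,(k-1)^r,k^{r-1})$, whose size is $P(r+2,-k)$, into the partitions of $n-P(r+2,-k)$, identifies the image $\mathcal{A}'_{r,n,k}$ as the set of partitions $\lambda$ of $n$ with $g_r(\lambda)\geqslant k$ and $g_{r-1}(\lambda)>k$, and then counts memberships: a partition with $g_r(\lambda)=k$ lies in exactly $k$ of these sets when $g_r(\lambda)=g_{r-1}(\lambda)$, and in exactly $k+1$ of them when $g_r(\lambda)<g_{r-1}(\lambda)$, which sums to $S_r(n)+G_r(n)$. (Your closing sketch of a ``purely bijective alternative'' is precisely this argument: your sets of partitions in which $1,\ldots,k-1$ occur at least $r$ times and $k$ at least $r-1$ times are exactly the paper's $\mathcal{A}'_{r,n,k}$.) Your main argument instead passes to generating functions: you invoke Theorem \ref{T0} to absorb $S_r(n)$, reduce to the identity $\sum_{n\geqslant 0}G_r(n)q^n=\frac{1}{(q;q)_\infty}\sum_{k\geqslant 0}\left(q^{rT_k-k}-q^{rT_k}\right)$, and prove that by the key structural fact that $g_r(\lambda)<g_{r-1}(\lambda)$ holds if and only if the least $r$-gap $a=g_r(\lambda)$ has multiplicity exactly $r-1$; this is the same fact that powers the paper's boundary case $j=k$, but you use it to factor the generating function by conditioning on $a$. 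Your computations check out (including $P(r+2,-k)=rT_k-k$ and the product manipulation giving $q^{rT_a-a}-q^{rT_a}$), and your route has the side benefit of producing an explicit generating function for $G_r(n)$, which the paper never writes down; the paper's route, in exchange, is entirely self-contained --- it needs neither Theorem \ref{T0} nor any $q$-series manipulation --- and explains combinatorially why each partition is counted $g_r(\lambda)$ or $g_r(\lambda)+1$ times.
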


In this paper, we provide a purely combinatorial proof of this result and some applications involving partitions into even numbers of parts, partitions with nonnegative rank, and partitions with nonnegative crank.

\section{Combinatorial proof of Theorem \ref{T0}} \label{sec2}

Fix  $r\geqslant 1$ and, for each $k\geqslant 0$ consider the fat staircase partition (written in exponential notation) $$\delta_r(k)=(1^r, 2^r, \ldots, (k-1)^r, k^{r}).$$ This is the staircase partition with largest part $k$ in which each part is repeated $r$ times. Its size is equal to $rT_k$.

As before, fix $r\geqslant 1$ and also fix $n \geqslant 0$. For each $k\geqslant 0$  we create an injection from the set of partitions of $n-rT_k$ into the set of partitions of $n$ $$\varphi_{r,n,k}:\{\m \vdash n-rT_k\} \hookrightarrow \{\l \vdash n\}$$ where $\varphi_{r,n,k}(\m)$ is the partition obtained from $\m$ by inserting the parts of the staircase $\delta_r(k)$. Denote by $\A_{r,n,k}$ the image of $\{\m \vdash n-rT_k\}$ under $\varphi_{r,n,k}$. Thus, $p(n-rT_k)=|\A_{r,n,k}|$ and  $\A_{r,n,k}$ consists precisely of the partitions $\l$ of $n$ satisfying $g_r(\l)> k$.

Consider an arbitrary partition $\l$ of $n$ with  $g_r(\l)=k$. Then $\l\in \A_{r,n,i}$, $i=0, 1, \ldots k-1$ and  $\l \not \in  \A_{r,n,j}$ with $j\geqslant k$.  Therefore, each partition of $n$ with $g_r(\l)=k$ is counted by the left hand side of \eqref{eqT0} exactly $k$ times.

\section{Proof of Theorem \ref{T1}}

We rewrite the identity \eqref{eq1} as 
\begin{equation}\label{eq2}
\sum_{k=0}^{\infty} (-q)^{T_k} = \frac{(q;q)_\infty}{(q^2;q^4)_\infty}.
\end{equation}
Applying bisection on \eqref{eq2}, we obtain:
\begin{equation}\label{eq3}
\frac{1}{2} \sum_{k=0}^\infty \left(q^{T_k} \pm (-q)^{T_k}\right)  = \frac{1}{2} \frac{(-q;-q)_\infty \pm (q;q)_\infty}{(q^2;q^4)_\infty}.
\end{equation}
Multiplying both sides of \eqref{eq3} by the reciprocal of $(q;q)_\infty$, we give
\begin{align*}
\frac{1}{2(q;q)_\infty} \sum_{k=0}^\infty \left( q^{T_k} \pm (-q)^{T_k}\right) 
& = \frac{(-q^2;q^2)_\infty}{2} \frac{(-q;-q)_\infty \pm (q;q)_\infty}{(q;q)_\infty} \nonumber \\
& = \frac{(-q^2;q^2)_\infty}{2} \left(\frac{(-q;-q)_\infty}{(q;q)_\infty} \pm 1\right)\nonumber \\
& = \frac{(-q;q)^2_\infty \pm (-q^2;q^2)_\infty}{2}.
\end{align*}
By this identity, with $q$ replaced by $q^r$, we obtain the relation
\begin{equation*}
\frac{1}{2(q^r;q^r)_\infty} \sum_{k=0}^\infty \left( q^{rT_k} \pm (-q^r)^{T_k}\right)=
\frac{(-q^r;q^r)^2_\infty \pm (-q^{2r};q^{2r})_\infty}{2},
\end{equation*}
that can be rewritten as
\begin{align}
&\frac{1}{2(q;q)_\infty} \sum_{k=0}^\infty \left( q^{rT_k} \pm (-q^r)^{T_k}\right) \nonumber\\
&\qquad = \frac{1}{2} \left( \frac{(-q^r;q^r)^2_\infty (q^r;q^r)_\infty}{(q;q)_\infty} \pm \frac{(-q^{2r};q^{2r})_\infty (q^r;q^r)_\infty}{(q;q)_\infty}  \right) \nonumber\\
&\qquad = \frac{1}{2} \left( \frac{(-q^{r};q^{r})_\infty (q^{r};q^{r})_\infty}{(q;q)_\infty (q^r;q^{2r})_\infty} \pm \frac{ (q^r,q^{2r};q^{2r})_\infty}{(q;q)_\infty (q^{2r};q^{4r})_\infty}  \right) \nonumber \\
&\qquad = \frac{1}{2} \left( \frac{(q^{2r};q^{2r})_\infty}{(q;q)_\infty (q^r;q^{2r})_\infty} \pm \frac{ (q^r,q^{3r},q^{4r};q^{4r})_\infty}{(q;q)_\infty}  \right). \label{eq4}
\end{align}
Considering the generating function for $p(n)$, i.e.,
$$\sum_{n=0}^\infty p(n) q^n =\frac{1}{(q;q)_\infty}$$
and the theta identity \eqref{eq1}, by  Theorem \ref{T0} we deduce that
$$\sum_{k=0}^\infty S_r(k) q^k = \frac{(q^{2r};q^{2r})_\infty}{(q;q)_\infty (q^r;q^{2r})_\infty}.$$
On the other hand, we have
$$ \sum_{k=0}^\infty U_r(k) q^k = \frac{ (q^r,q^{3r},q^{4r};q^{4r})_\infty}{(q;q)_\infty}.$$
Taking into account  the well-known Cauchy multiplication of two power series, 
we deduce our identities as combinatorial interpretations of \eqref{eq4}.

\section{Combinatorial proof of Theorem \ref{T2}}

The proof of Theorem \ref{T2} is analogous to the proof of Theorem \ref{T0}. 

For fixed $r\geqslant 1$ and, for each $k\geqslant 0$we denote by ' $$\delta_r(k)=(1^r, 2^r, \ldots, (k-1)^r, k^{r-1})$$ the staircase partition in which the largest part is $k$ and is repeated $r-1$ times and all other parts are repeated $r$ times. Its size is equal to $P(r+2,-k)$.

As before, fix $r\geqslant 1$ and also fix $n \geqslant 0$. For each $k\geqslant 0$  we create an injection from the set of partitions of $n-P(r+2,-k)$ into the set of partitions of $n$ $$\varphi'_{r,n,k}:\{\m \vdash n-P(r+2,-k)\} \hookrightarrow \{\l \vdash n\}$$ where $\varphi'_{r,n,k}(\m)$ is the partition obtained from $\m$ by inserting the parts of the staircase $\delta'_r(k)$. If  $\A'_{r,n,k}$ denotes the image of $\{\m \vdash n-P(r+2,-k)\}$ under $\varphi_{r,n,k}$, we have that  $p(n-P(r+2,-k))=|\A'_{r,n,k}|$ and  $\A'_{r,n,k}$ consists precisely of the partitions $\l$ of $n$ satisfying $g_r(\l)\geqslant k$ and $g_{r-1}(\l)> k$.

If $\l \vdash n$  has  $g_r(\l)=k$, then $\l\in \A'_{r,n,i}$, $i=0, 1, \ldots k-1$. If $g_{r-1}(\l)=k$, then $\l \not \in  \A'_{r,n,j}$ with $j\geqslant k$. If $g_{r-1}>k$, then $\l  \in  \A'_{r,n,k}$ but $\l \not \in  \A'_{r,n,j}$ with $j> k$. Therefore, each partition of $n$ with $g_r(\l)=k$ is counted by the left hand side of \eqref{pn} exactly $k$ times if $g_r(\l)=g_{r-1}(\l)$ and exactly $k+1$ times if $g_r(\l)<g_{r-1}(\l)$.

\section{Applications of Theorem \ref{T2}}

In this section we consider some special cases of Theorem \ref{T2} in order to discover and prove new identities involving Euler's partition function $p(n)$.

\subsection{Partitions into even numbers of parts}

Now we consider the following classical theta identity \cite[eq. 2.2.12]{Andrews76}
\begin{equation}
\frac{(q;q)_\infty}{(-q;q)_\infty} =1+2 \sum_{k=1}^\infty (-1)^k q^{k^2}.
\end{equation}
Elementary techniques in the theory of  partition \cite{Andrews76} allow us to derive a known combinatorial interpretation of this identity, namely
\begin{equation}\label{eq11}
p(n)+2\sum_{j=k}^n (-1)^k p(n-k^2) = p_e(n)-p_o(n),
\end{equation}
where $p_e(n)$ is the number of partitions of $n$ into even number of parts and 
$p_o(n)$ is the number of partitions of $n$ into odd number of parts.
Moreover, it is known that 
\begin{equation}\label{eq12}
p_e(n) = p(n)+\sum_{k=1}^n (-1)^k p(n-k^2)
\end{equation}
and
$$p_o(n) = -\sum_{k=1}^n (-1)^k p(n-k^2).$$
These relations can be considered a bisection of the identity \eqref{eq11}. 
Combining  identity \eqref{eq12} with the case $r=2$ of Theorem \ref{T2}, we derive the following result.

\begin{corollary}
	For $n\geqslant 0$,
	\begin{enumerate}
		\item[(i)] $\displaystyle{ \sum_{k=0}^\infty p\left( n-(2k)^2\right) = \frac{S_2(n)+G_2(n)+p_e(n)}{2};}$
		\item[(ii)] $\displaystyle{ \sum_{k=0}^\infty p\left( n-(2k+1)^2\right) = \frac{S_2(n)+G_2(n)-p_e(n)}{2}.}$		
	\end{enumerate}
\end{corollary}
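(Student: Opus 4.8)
The plan is to recognize that parts (i) and (ii) are precisely the even-indexed and odd-indexed halves of one and the same sum, and that Theorem \ref{T2} and equation \eqref{eq12} supply, respectively, the sum and the alternating difference of these two halves. Solving the resulting $2\times 2$ linear system then produces both formulas.

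First I would specialize Theorem \ref{T2} to $r=2$. Since $P(4,n)=n^2$ and $P(4,-n)=P(4,n)$, we have $P(4,-k)=k^2$, so the theorem reads
$$\sum_{k=0}^{\infty} p(n-k^2) = S_2(n)+G_2(n).$$
Writing $A$ and $B$ for the left-hand sides of (i) and (ii) respectively, and splitting the summation index $k$ according to its parity, this single sum decomposes as $A+B$. Hence the first relation is $A+B = S_2(n)+G_2(n)$.

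Next I would extract the alternating companion from \eqref{eq12}. Because the $k=0$ term of $\sum_{k\geqslant 0}(-1)^k p(n-k^2)$ is exactly $p(n)$, equation \eqref{eq12} may be rewritten compactly as $p_e(n)=\sum_{k=0}^{\infty}(-1)^k p(n-k^2)$. Splitting this sum by the parity of $k$ attaches a $+1$ to every even-indexed term and a $-1$ to every odd-indexed term, so it collapses to $A-B=p_e(n)$, which is the second relation.

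The two relations $A+B=S_2(n)+G_2(n)$ and $A-B=p_e(n)$ form a $2\times 2$ linear system in $A$ and $B$; adding them and dividing by $2$ gives (i), while subtracting and dividing by $2$ gives (ii). I do not expect any genuine obstacle: the only points that require care are the bookkeeping identification $P(4,-k)=k^2$ and the observation that the additive term $p(n)$ in \eqref{eq12} is exactly what lets the alternating sum begin at $k=0$, so that it aligns term-by-term with the even/odd split of $A-B$.
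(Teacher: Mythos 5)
Your proposal is correct and follows exactly the paper's route: the paper likewise combines the case $r=2$ of Theorem \ref{T2} (using $P(4,-k)=k^2$) with identity \eqref{eq12} viewed as the alternating sum $\sum_{k\geqslant 0}(-1)^k p(n-k^2)=p_e(n)$, then adds and subtracts to split the sum by parity of $k$. Your write-up simply makes explicit the bookkeeping the paper leaves to the reader.
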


\subsection{Partitions with nonnegative rank}

In 1944, Dyson \cite{Dyson} defined the rank of a partition as the difference between its largest part and  the number of its parts. 
Then he observed empirically that the partitions of $5n + 4$ (respectively $7n + 5$) form $5$ (respectively $7$) groups of equal size when sorted by their ranks modulo $5$ (respectively $7$). This interesting conjecture of Dyson was proved ten years later by Atkin and Swinnerton-Dyer \cite{Atkin}.
In this section, we denote by $R(n)$ the number of partitions of $n$ with nonnegative rank. 

It is known \cite[A064174]{Sloane} that the number of partitions of $n$ with nonnegative rank can be expressed in terms of Euler's partition function as follows:
\begin{equation}
R(n)=\sum_{k=0}^n (-1)^k p\left( n-k(3k+1)/2\right).
\end{equation}
Considering the case $r=3$ of Theorem \ref{T2}, we obtain the following result.

\begin{corollary}
	For $n\geqslant 0$,
	\begin{enumerate}
		\item[(i)] $\displaystyle{ \sum_{k=0}^\infty p\left( n-k(6k+1)\right) = \frac{S_3(n)+G_3(n)+R(n)}{2};}$
		\item[(ii)] $\displaystyle{ \sum_{k=0}^\infty p\left( n-(2k+1)(3k+2)\right) = \frac{S_3(n)+G_3(n)-R(n)}{2}.}$		
	\end{enumerate}
\end{corollary}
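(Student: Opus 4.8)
The plan is to specialize Theorem \ref{T2} to $r=3$ and then bisect the resulting sum according to the parity of the summation index, matching the two halves against the total $S_3(n)+G_3(n)$ and against the recalled generating identity for $R(n)$.

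First I would record that for $s=5$ the generalized pentagonal numbers are $P(5,-k)=k(3k+1)/2$, so Theorem \ref{T2} with $r=3$ reads
\begin{equation*}
\sum_{k=0}^\infty p\bigl(n-k(3k+1)/2\bigr)=S_3(n)+G_3(n).
\end{equation*}
I would then split this sum by the parity of $k$. Setting $k=2m$ gives $P(5,-2m)=m(6m+1)$, while $k=2m+1$ gives $P(5,-(2m+1))=(2m+1)(3m+2)$. Writing $A$ and $B$ for the two resulting partial sums (the left-hand sides of (i) and (ii), respectively), the displayed identity becomes $A+B=S_3(n)+G_3(n)$.

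The remaining ingredient is the recalled identity $R(n)=\sum_{k=0}^\infty(-1)^k p\bigl(n-k(3k+1)/2\bigr)$. Since $(-1)^k$ equals $+1$ on the even-indexed terms and $-1$ on the odd-indexed terms, under the same bisection this reads $A-B=R(n)$. Adding and subtracting the relations $A+B=S_3(n)+G_3(n)$ and $A-B=R(n)$ then gives $A=\tfrac12\bigl(S_3(n)+G_3(n)+R(n)\bigr)$ and $B=\tfrac12\bigl(S_3(n)+G_3(n)-R(n)\bigr)$, which are exactly (i) and (ii).

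I do not expect a genuine obstacle: the corollary is a direct parity bisection of Theorem \ref{T2} paired with the stated formula for $R(n)$. The only point requiring care is the index bookkeeping, namely verifying that the even-indexed generalized pentagonal numbers collapse to $m(6m+1)$ and the odd-indexed ones to $(2m+1)(3m+2)$; both follow immediately from $P(5,-k)=k(3k+1)/2$.
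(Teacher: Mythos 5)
Your proof is correct and follows exactly the paper's (implicit) argument: the paper likewise combines the case $r=3$ of Theorem \ref{T2}, which gives $\sum_{k\geqslant 0} p\bigl(n-k(3k+1)/2\bigr)=S_3(n)+G_3(n)$, with the recalled identity $R(n)=\sum_{k\geqslant 0}(-1)^k p\bigl(n-k(3k+1)/2\bigr)$, and bisects by parity of $k$. Your index bookkeeping ($P(5,-2m)=m(6m+1)$, $P(5,-(2m+1))=(2m+1)(3m+2)$) is accurate, so nothing is missing.
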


\subsection{Partitions with nonnegative crank}

Dyson \cite{Dyson} conjectured the existence of a crank function for partitions that would provide a combinatorial proof of Ramanujan's congruence modulo $11$. Forty-four years later, Andrews and Garvan \cite{Andrews88} successfully found such a function which yields a combinatorial explanation of Ramanujan congruences modulo $5$, $7$, and $11$. 
For a partition $\lambda$, let $l(\lambda)$ denote the largest part of $\lambda$, $\omega(\lambda)$ denote the number of $1$'s in $\lambda$, and $\mu(\lambda)$  denote the number of parts of $\lambda$ greater than $\omega(\lambda)$. The crank $c(\lambda)$ is defined by
$$
c(\lambda) =
\begin{cases}
l(\lambda), & \text{for $\omega(\lambda)=0$,}\\
\mu(\lambda)-\omega(\lambda),  & \text{for $\omega(\lambda)>0$.}
\end{cases}
$$ 
In this section, we denote by $C(n)$ the number of partitions of $n$ with nonnegative crank. 
 
 We known \cite[A064428]{Sloane} that the number of partitions of $n$ with nonnegative crank can be expressed in terms of Euler's partition function $p(n)$:
 \begin{equation}\label{crank}
 C(n)=\sum_{k=0}^\infty (-1)^k p(n-T_k).
 \end{equation}
We have the following result related to the parity of $C(n)$.

\begin{corollary}
	For $n\geqslant 0$, the number of partitions of $n$ with nonnegative crank is even except when $n$ is twice a generalized pentagonal number.
\end{corollary}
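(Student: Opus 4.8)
The plan is to reduce the known formula \eqref{crank} for $C(n)$ modulo $2$ and then to invoke the parity information already extracted in the earlier corollaries. First I would start from
\begin{equation*}
C(n)=\sum_{k=0}^\infty (-1)^k p(n-T_k)
\end{equation*}
and observe that modulo $2$ every sign becomes trivial: since $(-1)^k \equiv 1 \equiv (-1)^{T_k} \pmod 2$, the sign pattern of \eqref{crank} collapses onto the sign pattern appearing in Corollary \ref{C1}, giving
\begin{equation*}
C(n) \equiv \sum_{k=0}^\infty (-1)^{T_k}\, p(n-T_k) \pmod 2.
\end{equation*}
Here all sums are effectively finite, since $p(n-T_k)=0$ once $T_k>n$, so no convergence issue arises.

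The second step is to evaluate the right-hand side using Corollary \ref{C1}, which computes exactly this sum: it equals $q(n/2)$ when $n$ is even and $0$ when $n$ is odd, where $q$ counts partitions into distinct parts. Consequently $C(n)$ is odd precisely when $n$ is even and $q(n/2)$ is odd. The third step then applies the cited result \cite[Corollary 4.7]{Merca16}, namely that $q(m)$ is odd if and only if $m$ is a generalized pentagonal number. Combining these, $C(n)$ is odd exactly when $n$ is even and $n/2$ is a generalized pentagonal number, i.e.\ when $n$ is twice a generalized pentagonal number, and is even otherwise. This is the assertion of the corollary.

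There is no substantive obstacle here; the only point requiring notice is the initial observation that the parity reduction identifies the alternating sum $\sum_k (-1)^k p(n-T_k)$ with the theta-weighted sum $\sum_k (-1)^{T_k} p(n-T_k)$. As an alternative route that avoids Corollary \ref{C1} altogether, the same reduction yields
\begin{equation*}
C(n) \equiv \sum_{k=0}^\infty p(n-T_k) = S_1(n) \pmod 2
\end{equation*}
by \eqref{eq0}, after which Corollary \ref{C6} delivers the conclusion at once. In fact this second viewpoint shows more: $C(n)$ and $S_1(n)$ always have the same parity, so the parity statement for the nonnegative crank is merely the parity statement for the sum of least gaps seen through the common reduction $(-1)^k \equiv 1 \pmod 2$.
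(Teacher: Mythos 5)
Your proof is correct, and its core coincides with the paper's: the paper also reduces the parity of $C(n)$ to that of $S_1(n)$ and then invokes Corollary \ref{C6}. The only real difference is how the congruence $C(n)\equiv S_1(n)\pmod 2$ is obtained. You get it for free from the observation that signs are invisible modulo $2$, so that \eqref{crank} and \eqref{eq0} give $C(n)\equiv\sum_{k\geqslant 0} p(n-T_k)=S_1(n)\pmod 2$; this is more elementary and never touches Theorem \ref{T2}. The paper instead adds \eqref{crank} to the case $r=1$ of Theorem \ref{T2} (equivalently \eqref{eq0}) to obtain the exact identity $\sum_{k\geqslant 0} p(n-T_{2k}) = \bigl(C(n)+S_1(n)\bigr)/2$, from which the parity agreement follows since the left-hand side is an integer. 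What the paper's route buys is this stronger exact formula, and it keeps the corollary a genuine application of Theorem \ref{T2}, which is the purpose of that section; what your route buys is brevity and independence from the paper's new theorem. Your primary route, through Corollary \ref{C1} and \cite[Corollary 4.7]{Merca16}, is not genuinely different either: that chain is exactly how the paper proves Corollary \ref{C6}, so there you have simply inlined the proof of Corollary \ref{C6} rather than citing it.
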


\begin{proof}
	Considering the case $r=1$ of Theorem \ref{T2} and the identity \eqref{crank}, we obtain
	$$\sum_{k=0}^\infty p(n-T_{2k}) = \frac {C(n)+S_1(n)}{2}.$$
	We see that the number of partitions of $n$ with nonnegative crank and the sum of the least gaps in all partitions of $n$ have the same parity. According to Corollary \ref{C6} the proof is finished.
\end{proof}

By the identity \eqref{crank} and Corollary \ref{Cor1}, we easily get two identities. 

 \begin{corollary}
 	For $n\geqslant 0$,
 	\begin{enumerate}
 		\item[(i)] $\displaystyle{ \sum_{k=0}^\infty (-1)^k p\left( n-T_{k+2\lfloor k/2 \rfloor}\right) = \frac{C(n)+U_1(n)}{2};}$
 		\item[(ii)] $\displaystyle{ \sum_{k=0}^\infty (-1)^k p\left( n-T_{k+2\lfloor k/2 \rfloor+2}\right) = \frac{C(n)-U_1(n)}{2}.}$		
 	\end{enumerate}
 \end{corollary}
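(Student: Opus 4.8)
The plan is to add and subtract the two available expansions for $C(n)$ and $U_1(n)$ term-by-term and then reindex. Starting from \eqref{crank},
$$C(n)=\sum_{k=0}^\infty (-1)^k\, p(n-T_k),$$
and from the case $r=1$ of Corollary \ref{Cor1},
$$U_1(n)=\sum_{k=0}^\infty (-1)^{T_k}\, p(n-T_k),$$
I would form the half-sum and half-difference, obtaining
$$\frac{C(n)\pm U_1(n)}{2}=\sum_{k=0}^\infty \frac{(-1)^k \pm (-1)^{T_k}}{2}\,p(n-T_k).$$
Since $p(m)=0$ for $m<0$, each series is a finite sum, so no convergence issue arises.

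The crucial observation is that the coefficient $\big((-1)^k\pm(-1)^{T_k}\big)/2$ depends only on $k \bmod 4$, because the parity of $T_k=k(k+1)/2$ is periodic with period $4$: explicitly $T_k$ is even exactly when $k\equiv 0,3\pmod 4$ and odd exactly when $k\equiv 1,2\pmod 4$. Tabulating $(-1)^k$ against $(-1)^{T_k}$ over the four residue classes shows that in the half-sum only the classes $k\equiv 0,1\pmod 4$ survive, with coefficients $+1$ and $-1$ respectively, while in the half-difference only the classes $k\equiv 2,3\pmod 4$ survive, with coefficients $+1$ and $-1$.

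Finally I would repackage the two surviving residue classes into a single sum over all $k\geqslant 0$ using the substitution $k\mapsto k+2\lfloor k/2\rfloor$. A direct check gives $k+2\lfloor k/2\rfloor=4j$ for $k=2j$ and $=4j+1$ for $k=2j+1$, so $(-1)^k\, p\big(n-T_{k+2\lfloor k/2\rfloor}\big)$ runs precisely through the surviving terms of the half-sum with the correct signs, yielding (i); the shifted substitution $k\mapsto k+2\lfloor k/2\rfloor+2$ produces the indices $4j+2$ and $4j+3$ and yields (ii). The only point requiring care — and the single place an error could creep in — is the bookkeeping of the parity of $T_k$ modulo $4$ together with the verification that the floor substitution matches both the surviving index set and the attached sign; once that period-$4$ pattern is pinned down, the rest is routine reindexing.
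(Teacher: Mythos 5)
Your proposal is correct and follows exactly the route the paper intends: the paper's proof is the single sentence that the identities follow ``easily'' from the crank expansion \eqref{crank} and Corollary \ref{Cor1} with $r=1$, and your half-sum/half-difference computation with the period-$4$ parity of $T_k$ (even iff $k\equiv 0,3 \pmod 4$) plus the reindexing $k\mapsto k+2\lfloor k/2\rfloor$ (resp.\ $+2$) is precisely the omitted bookkeeping. All sign and index checks in your argument are accurate, so nothing further is needed.
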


\bigskip


\end{document}